\newtheorem{theorem}{Theorem}
\newtheorem{corollary}{Corollary}
\newtheorem{definition}{Definition}
\newtheorem{assumption}{Assumption}
\newtheorem{property}{Property}
\newcommand{\roadset}{{[}n{]}} 
\newcommand{\vtypeset}{{[}m{]}}
\newcommand{\flowdemand}{\bar{f}}
\newcommand{\sharedroads}{\mathcal{N}}
\newcommand{\typesonroad}{\mathcal{M}}
\newcommand{\prevflow}{{f^*}}
\newcommand{\newflow}{{\tilde{f}}}
\newcommand{\largenum}{{P}}
\DeclareMathOperator*{\argmin}{arg\,min}
\title{\LARGE \bf
	Optimal Tolling for Multitype Mixed Autonomous Traffic Networks
}
\author{Daniel A.~Lazar and Ramtin~Pedarsani
	\thanks{The authors are with the Department of Electrical and Computer Engineering, 
		University of California, Santa Barbara
		{\tt\small \{dlazar, ramtin\}@ucsb.edu}}
}
\begin{document}
	
\maketitle
\thispagestyle{empty}
\pagestyle{empty}

\begin{abstract}
When selfish users share a road network and minimize their individual travel costs, the equilibrium they reach can be worse than the socially optimal routing. Tolls are often used to mitigate this effect in traditional congestion games, where all vehicle contribute identically to congestion. However, with the proliferation of autonomous vehicles and driver-assistance technology, vehicles become heterogeneous in how they contribute to road latency. This magnifies the potential inefficiencies due to selfish routing and invalidates traditional tolling methods. To address this, we consider a network of parallel roads where the latency on each road is an affine function of the quantity of flow of each vehicle type. We provide tolls (which differentiate between vehicle types) which are guaranteed to minimize social cost at equilibrium. The tolls are a function of a calculated optimal routing; to enable this tolling, we prove that some element in the set of optimal routings has a lack of \emph{cycles} in a graph representing the way vehicles types share roads. We then show that unless a planner can differentiate between vehicle types in the tolls given, the resulting equilibrium can be unboundedly worse than the optimal routing, and that marginal cost tolling fails in our setting.
\end{abstract}

\section{INTRODUCTION}
How autonomous vehicles will change the efficiency of traffic networks is still ambiguous. While the platooning capabilities of autonomous vehicles may increase the capacity of roads up to three-fold \cite{lioris2017platoons}, when users choose their routes selfishly, the presence of capacity-improving autonomous vehicles may worsen congestion \cite{mehr2018can, brown2019tragedy}, even beyond the selfish equilibria which emerge in the presence of only human drivers \cite{roughgarden2002bad}. Prior works have studied how to use tolling to mitigate these effects in networks with a single vehicle type \cite{beckmann1956studies} or for roads shared between human drivers and autonomous vehicles which are uniform in their autonomous capabilities \cite{mehr2019pricing,lazar2019optimal}.

However, currently there are many different vehicles on the market with different levels of autonomy, including multiple Adaptive Cruise Control (ACC) systems, which affect road congestion differently \cite{gunter2020commercially}. Moreover, even within human-driven vehicles, different types of vehicles vary in how they affect congestion. To adequately understand and control traffic networks, models must incorporate multiple vehicle types. Because of this, we consider tolling for a network shared between multiple vehicle types, each affecting road latency differently. This is a setting for which no tolling results yet exist, absent extremely restrictive assumptions \cite{dafermos1972multiclass_user}. 

We consider a network of parallel roads with an arbitrary number of vehicle types, where the latency on each road is an affine function of the flow of each vehicle type on the road. We provide a theoretical property of optimal routing in this setting and use this property to establish optimal tolls. We then show that tolls must be \emph{differentiated}, meaning the social planner must be able to levy different tolls to each vehicle type on a road. We further show the failure of a classic tolling scheme, and conclude with a numerical example of our scheme.

We summarize our contributions as follows.
\begin{enumerate}
\item We derive a theoretical property of the set of optimal routing,
\item We use this property to design tolls which guarantee that the only existing selfish equilibrium also minimizes the social cost, and
\item We show the possible failure of nondifferentiated tolls and marginal cost tolling.
\end{enumerate}

\textbf{Previous Work. }
Our work builds on the field of \emph{congestion games}, which studies routing of vehicles over transportation networks, where each road is defined as an edge of a graph, where the latency experienced by users of that edge is a (typically increasing) function of the vehicle flow on that edge. There are several relevant focuses of study in this field. The first is optimal traffic assignment, which studies how to route vehicle flow in a manner which minimizes the social cost, typically understood to be the aggregate latency experienced by all users \cite{dafermos1969traffic_general}. Another is understanding equilibria which arise when all users are self-interested and choose routes to minimizes their individual travel latency \cite{wardrop1900some,depalma1998optimization}. Another vein of research is understanding and bounding the gap between social cost when users are routed optimally with respect to the social cost compared to when users choose routes selfishly \cite{roughgarden2002bad}. Many works study tolling and seek to find \emph{optimal tolls}, which, when applied, make it so that the only equilibria that exist will minimize the social cost \cite{beckmann1956studies}. Other works seek to improve the efficiency of equilibria by persuading drivers with route recommendations \cite{zhu2019routing,wu2019information}.

Some previous works extend these topics to setting where each road has multiple vehicle types which affect congestion differently, dealing with traffic assignment \cite{dafermos1972multiclass_user}, equilibria \cite{correa2008geometric}, bounding the gap between optimal and equilibria costs \cite{perakis2007price, lazar2020routing}, and tolling \cite{dafermos1973toll}. However, the cited works on traffic assignment, equilibria, and tolling have a restrictive critical assumption that is violated in the general setting, including in the model developed below for mixed autonomy. A preliminary version of this work studies our setting when there are only two vehicle types on each road \cite{lazar2019optimal}. Also in the setting of two vehicle types, \cite{mehr2018can} shows that seemingly paradoxically, converting human-driven vehicles to more efficient autonomous vehicles can worsen social cost at equilibrium; in another work the authors bound this effect \cite{mehr2019will}. In \cite{mehr2019pricing}, the authors provide optimal tolls for general networks with multiple source-destination pairs in the homogeneous case, where the difference in how each of the two vehicle types affect congestion is constant across all roads in the network. However, none of the prior works find optimal tolls for traffic networks with more than two vehicle types; in light of this we find optimal tolls for parallel networks with affine latency functions with no restrictions on the form of the affine latency functions aside from requiring it to be increasing with respect to the flow of each vehicle type.

\section{MODEL}\label{sct:model}
We consider a network of $n$ parallel roads with vehicle flow demand from $m$ vehicle types. We use $\roadset = \{ 1,2,\ldots,n \}$ and $\vtypeset = \{ 1,2,\ldots,m \}$ to denote the set of roads and vehicle types, respectively. In general, for an integer $x$, we define ${[}x{]}= \{1,2,\ldots,x\}$. We generally use $i$ to index a road and $j$ to index a vehicle type. We consider nonatomic vehicle flow and use $f^j_i \ge 0$ to denote the magnitude of vehicle flow of type $j \in \vtypeset$ on road $i \in \roadset$. We consider inelastic flow demand, where each vehicle type $j$ has flow demand $\flowdemand^j$; a feasible routing $f$ is one such that $\sum_{i \in \roadset}f^j_i = \flowdemand^j$ for all $j \in \vtypeset$ and $f^j_i\ge 0$ for all $i \in \roadset$ and $j \in \vtypeset$. 
\begin{definition}\label{def:feas}
	Let $\mathcal{F}_{\flowdemand}$ denote the set of \emph{feasible routings} for flow demand vector $\flowdemand$. Then, $\mathcal{F}_{\flowdemand} = \{ (f^j_i)_{i \in \roadset, j \in \vtypeset} : f^j_i \ge 0 \land i \in \roadset \land j \in \vtypeset \land \sum_{i\in \roadset}f^j_i = \flowdemand^j \; \forall j \in \vtypeset  \} $.
\end{definition}

We define the flow vector as
$$f = \begin{bmatrix}f^1_1, f^2_1, \ldots, f^m_1, f^1_2, f^2_2, \ldots, f^m_n \end{bmatrix}^T \; .$$
We also define the flow vector on road $i$ as $f_i =  \begin{bmatrix}f^1_i, f^2_i, \ldots, f^m_i \end{bmatrix}^T$.

Each road $i$ has a \emph{latency function} that is experienced identically by all vehicles on the road. We consider vehicle types with different autonomous technologies allowing them each to maintain different headway to the vehicle in front of it. Let $h_i^j$ denote the nominal space used by vehicle of type $j$ on road $i$, where the nominal space includes the length of the vehicle and its nominal headway. We then model road capacity to be inversely related to the average space occupied by a vehicle on a road \cite{lazar2017routing,askari2017effect}:
$$q_i(f_i) = v_id_i\sum_{j\in\vtypeset}f^j_i /(\sum_{j\in\vtypeset}f^j_i h^j_i)\; , $$
where $v_i$ and $d_i$ respectively denote the free-flow length of the road and the length of the road. Using this in conjunction with the Bureau of Public Roads latency model \cite{bureau1964manual}, we find latency function
$$\ell_i(f_i) =  t_i(1+\rho_i(\sum_{j \in \vtypeset}h^j_i f^j_i)^{\sigma_i}) \; , $$
where $t_i$ denotes the free-flow latency and $\rho_i$ and $\sigma_i$ are model parameters. Choosing $\sigma_i=1$ for all $i \in \roadset$ and letting $a^0_i = t_i$ and $a^j_i=\rho_i h^j_i$, we derive our latency function:
\begin{equation}\label{eq:optimization}
	\ell_i(f_i) = a^0_i + \sum_{j \in \vtypeset}a^j_i f^j_i \; .
\end{equation}

As mentioned above, $a^0_i$ denotes the free-flow latency of road $i$ and $a^j_i$ denotes the scaling by which the latency on road $i$ increases with the addition of vehicle type $j$. Accordingly, $a^0_i \ge 0$ and $a^j_i \ge 0$ for all roads and vehicle types.

\begin{assumption}\label{asmp:increasing_cost}
	The latency of each road is strictly increasing with the flow of each vehicle type on that road. Mathematically, $\frac{\partial \ell_i}{\partial f^j_i}(f) > 0 \; \forall i \in \roadset, \forall j \in \vtypeset$. This is equivalent to the condition $a^j_i>0$ for all $i\in \roadset$, $j \in \vtypeset$.
\end{assumption}

We wish to minimize the social cost, which we consider to be the total latency experienced by all users of the network:
\begin{equation}\label{eq:objective}
J(f) = \sum_{i \in \roadset} (\sum_{j \in \vtypeset} f^j_i) \ell_i(f_i) \; .
\end{equation}

We will later derive properties of the set of optimal routings
\begin{equation}\label{eq:optimization}
	\mathcal{F}^* = \argmin_{f \in \mathcal{F}_{\flowdemand}}J(f) \; .
\end{equation}

Note that in general the optimal routing is not unique, and multiple different routing choices can yield the same minimum cost. We do not make claims about all routings that satisfy \eqref{eq:optimization}, rather we will make claims that apply to at least one routing in the set of routings that minimize the social cost (\emph{i.e.} at least one routing in the set $\mathcal{F}^*$).

In addition to considering the socially optimal routings, we also consider how selfish users will choose their routes. To influence this user choice, we levy tolls, where tolls for different user types can differ on a given road. We model user type $j$ on road $i$ as experiencing cost
\begin{equation}
	c^j_i(f) = \ell_i(f) + \tau^j_i \; ,
\end{equation}
where $\tau^j_i$ is the toll levied on user type $j$ on road $i$. Note that only the toll can make users of different types experience different costs on the same road; we assume all users experience road latency identically. Also note that tolls are considered to be circulated back into the public coffers and are therefore not included in the social cost \eqref{eq:objective}. We consider users who are myopic and selfish; we therefore model users as following a Nash Equilibrium.
\begin{definition}\label{def:NE}
	A flow $f$ is a \emph{Nash Equilibrium} if $f^j_i > 0$ implies $c^j_i(f) \le c^j_{i'}(f)$ for all $i, i' \in \roadset$, $j \in \vtypeset$.
\end{definition}
Since we consider a networks of parallel roads, a flow is at Nash Equilibrium if no user can decrease their cost by switching roads.

We define some notation to make it easier to discuss properties of specific routings. For a specific routing $f$, we use $\sharedroads^f_j$ to denote the set of roads with positive flow of vehicle type $j$:
\begin{equation*}
	\sharedroads^f_j = \{i : f^j_i > 0 \land i \in \roadset \} \; .
\end{equation*}
Similarly, we use $\typesonroad^f_i$ to denote the set of vehicle types with positive flow on road $i$ for the routing $f$:
\begin{equation*}
	\typesonroad^f_i = \{j : f^j_i > 0 \land j \in \vtypeset \} \; .
\end{equation*}

The theoretical results established in the next section conceptualize vehicle flow on roads in the form of a graph, where for each specific routing $f$, a graph can be constructed. We construct a bipartite graph $G=(U,V,E)$ where one set of nodes is the set of roads ($U = \roadset$) and one set of nodes is the set of vehicle types ($V=\vtypeset$). The set of edges connect vehicle types to roads on which they have positive flow, \emph{i.e.}
\begin{equation}\label{eq:graph}
	E = \{ (i, j) : i \in \roadset \land  j \in \typesonroad^f_i \} \; ,
\end{equation}
or equivalently, $E = \{ (i, j) : f^j_i>0 \land i \in \roadset \land j \in \vtypeset \}$. In other words, for a routing $f$, there is an edge between the nodes denoting road $i$ and vehicle type $j$ if there is positive flow of type $j$ on road $i$. We illustrate this in Figure~\ref{fig:cyclic_acyclic}.

\begin{figure}
	\centering
	\includegraphics[width=1\linewidth]{./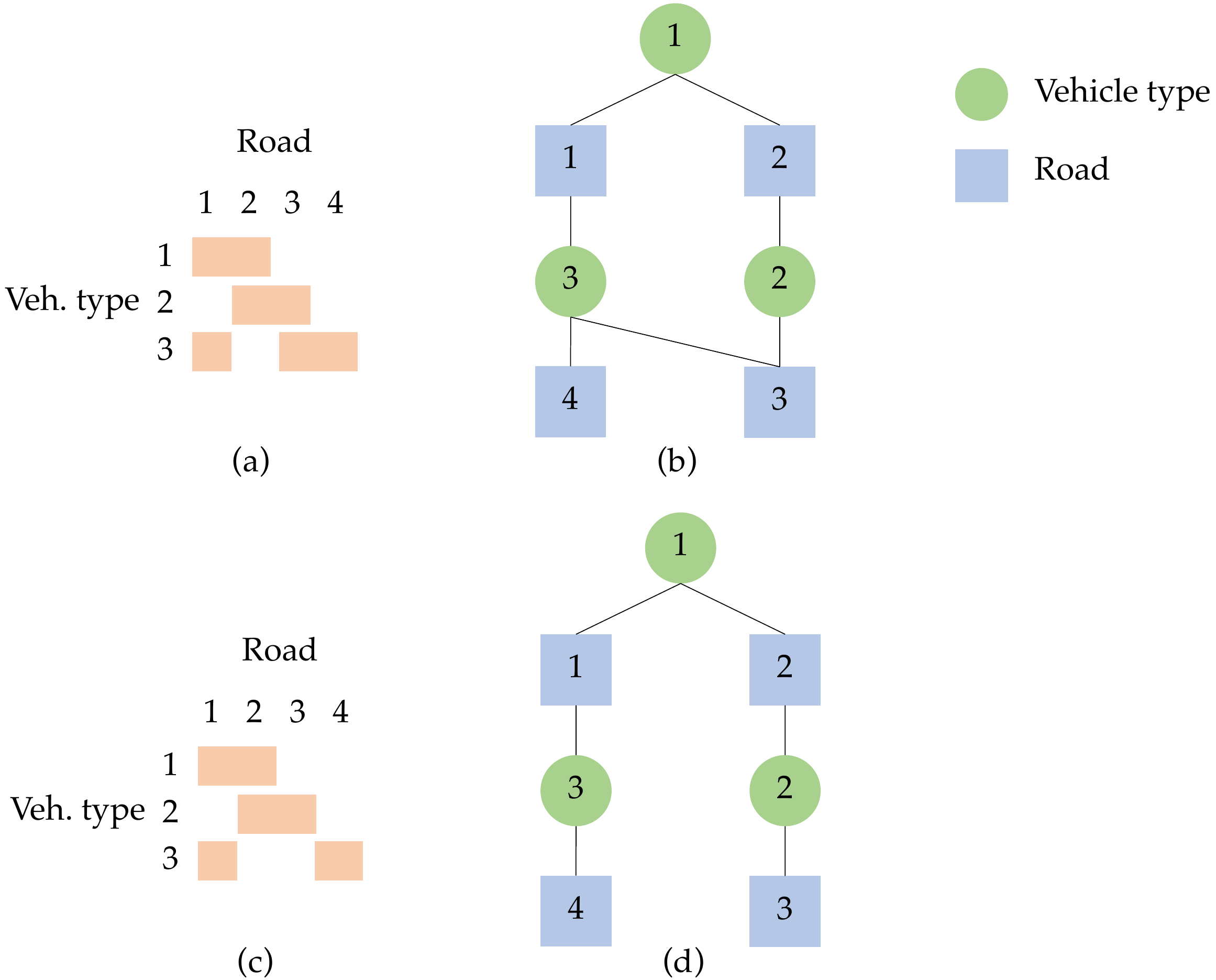}
	\caption{Two example routings for a network with four roads and three vehicle types. In (a), vehicle type 1 has positive flow on roads 1 and 2, type 2 has positive flow on roads 2 and 3, and type 3 has positive flow on road 1, 3, and 4. (b) shows the corresponding bipartite graph. (c) shows a similar routing but type 3 has zero flow on road 3, and (d) shows its corresponding bipartite graph.}
	\label{fig:cyclic_acyclic}
\end{figure}

\section{TOLLING}\label{sct:results}
In this section we establish tolls which ensure that the social cost is minimized in any resulting equilibrium. We do this in two theorems: the first establishes properties about some routing in the set of routings which minimize the social cost, and the second provides optimal tolls (which are constructed based on the routing which is proved to exist in the first theorem) and proves their optimality. We begin with the first result.

\begin{theorem}\label{thm:routing}
	There exists a routing in the set of routings minimizing social cost, $f \in \mathcal{F}^*$, such that $G(f)$ is acyclic, where $G=(U,V,E)$ is constructed as in \eqref{eq:graph}, \emph{i.e.} where nodes are the roads and vehicle types, and edges exist between road $i$ and vehicle type $j$ when $f^j_i>0$.
\end{theorem}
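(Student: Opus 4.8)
The plan is to start from an arbitrary optimal routing $f \in \mathcal{F}^*$ and repeatedly \emph{cancel} cycles in $G(f)$ by circulating flow around them, in a way that keeps the social cost and feasibility intact while deleting at least one edge each time. Since $G(f)$ has finitely many edges, this procedure must terminate at an optimal routing whose graph is acyclic.

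Suppose $G(f)$ contains a cycle. Because $G$ is bipartite, any simple cycle alternates between road nodes and type nodes and has even length, say $i_1, j_1, i_2, j_2, \ldots, i_k, j_k, i_1$. I would define a perturbation direction $\delta$ supported on the $2k$ edges of this cycle, assigning alternating signs $+1, -1, +1, \ldots$ while traversing it, and consider the family $f(\epsilon) = f + \epsilon \delta$. The alternating signs guarantee two facts. At each type node the two incident cycle edges carry opposite signs, so $\sum_i \delta^j_i = 0$ for every $j \in \vtypeset$ and the demand constraint $\sum_i f^j_i = \flowdemand^j$ is preserved; and at each road node the two incident cycle edges likewise carry opposite signs, so $\sum_j \delta^j_i = 0$ for every $i \in \roadset$, i.e.\ the \emph{total} flow $\sum_j f^j_i$ on each road is unchanged. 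Since every cycle edge has strictly positive flow, both $f(\epsilon)$ and $f(-\epsilon)$ remain feasible for small $\epsilon > 0$, and we may increase $|\epsilon|$ until the flow on some cycle edge first reaches zero at a value $\epsilon^*$.

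The crux is to show $J$ is constant along this segment. Writing $J(f) = \sum_i (\sum_j f^j_i)(a^0_i + \sum_k a^k_i f^k_i)$ and using that the per-road total $\sum_j f^j_i$ is held fixed under the perturbation, only the factor $\sum_k a^k_i f^k_i$ varies with $\epsilon$, and it does so \emph{linearly}; hence $J(f(\epsilon))$ is an affine function of $\epsilon$. Because $f$ minimizes $J$ over $\mathcal{F}_{\flowdemand}$ and both $f(\epsilon)$ and $f(-\epsilon)$ are feasible for small $\epsilon$, this affine function attains its minimum at an interior point, which forces it to be constant. (Equivalently, the Hessian quadratic form $\delta^T \nabla^2 J\, \delta = \sum_i 2\,(\sum_j a^j_i \delta^j_i)(\sum_j \delta^j_i)$ vanishes because $\sum_j \delta^j_i = 0$ on each road, and first-order optimality kills the remaining linear term.) Therefore $f(\epsilon^*) \in \mathcal{F}^*$ and it has strictly fewer positive-flow edges than $f$.

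Iterating this cancellation removes at least one edge from $G$ at each step while staying in $\mathcal{F}^*$; since the edge count is a strictly decreasing nonnegative integer, after finitely many steps we reach an optimal routing whose graph contains no cycle, proving the claim. I expect the main obstacle to be the cost-invariance step: the argument hinges on the observation that the alternating cycle perturbation preserves the \emph{aggregate} flow on every road, not merely each type's demand, which is precisely what collapses the otherwise quadratic objective to an affine function of $\epsilon$ and lets global optimality force invariance. The only other point requiring a brief check is termination, which is immediate since deleting a vanishing edge cannot introduce a new cycle.
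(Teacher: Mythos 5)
Your proposal is correct and matches the paper's proof essentially step for step: the paper constructs the same alternating cycle direction (one $+1$ and one $-1$ entry per road on the cycle, with each type's demand conserved), pushes along it until a nonnegativity constraint becomes active, and establishes cost invariance via a second-order Taylor expansion in which first-order optimality kills the linear term and the block-diagonal Hessian quadratic form $d^T H d$ vanishes road by road. Your affine-in-$\epsilon$ observation (per-road totals preserved, so $J$ restricted to the segment is affine and interior optimality forces it to be constant) is a streamlined packaging of that same computation --- which you in fact include parenthetically --- and your termination argument (the perturbation adds no new edges, so the edge count strictly decreases) is also the paper's.
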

\begin{proof}
	We prove this theorem constructively. We show that if there exists a routing in the set $\mathcal{F}^*$ that has a cyclic bipartite graph, we can break each cycle without altering the cost. We do this by showing that for a cyclic routing $f'$, there exists a feasible direction $d$ and that moving in the direction $d$ will not alter the cost and will eventually break the cycle.

The second-order partial derivatives of \eqref{eq:objective} are as follows
\begin{equation*}
\frac{\partial^2 J }{\partial f^j_i \partial f^{j'}_{i'} }(f) = \begin{cases} 0 &\mbox{if } i \neq i' \\
a^j_i + a^{j'}_i &\mbox{otherwise} \; .
\end{cases}
\end{equation*}

Since we define $f = \begin{bmatrix}f^1_1, f^2_1, \ldots, f^m_1, f^1_2, f^2_2, \ldots, f^m_n \end{bmatrix}^T$, the Hessian matrix is therefore block-diagonal in the following form:
\begin{equation*}
H = \begin{bmatrix}
H_1 & 0 & \ldots & 0 \\
0 & H_2 & \ldots & 0 \\
\vdots & \vdots & \ddots & \vdots \\
0 & 0 & \ldots & H_n
\end{bmatrix} \; ,
\end{equation*}
where the block corresponding to road $i$ is
\begin{equation*}
H_i = \begin{bmatrix}
2a^1_i & a^1_i + a^2_i & \ldots & a^1_i + a^m_i \\
a^1_i + a^2_i & 2a^2_i & \ldots & a^2_i + a^m_i \\
\vdots & \vdots & \ddots & \vdots \\
a^1_i + a^m_i & a^2_i + a^m_i & \ldots & 2a^m_i
\end{bmatrix} \; .
\end{equation*}

Consider a routing $f'$ which minimizes social cost and has a corresponding \emph{cyclic} graph. We will alter this routing and break the cycle while maintaining the same social cost. 

Since the graph is bipartite, any cycle will have the same number of nodes of each type. For some cycle in $f'$, let us use $r \in \{2,3, \ldots, \min(m,n)\}$ to denote the number of roads (and therefore vehicle types as well) in a simple cycle. The vehicles and roads are indexed arbitrarily, so let us consider the cycle to be comprised of the first $r$ roads (roads $\{1,2,\ldots,r\}$ and the first $r$ vehicle types (vehicle types $\{1,2,\ldots,r\})$. Let road $1$ be shared between vehicle types $1$ and $2$, road $2$ be shared between vehicle types $2$ and $3$, and so on, until road $r$ which is shared between types $r$ and $1$. The remaining roads and vehicle types are indexed arbitrarily.

Based on this feasible routing $f'$, we construct another routing $\tilde{f}$ which is also feasible. Consider flow $f' + \alpha d$, where $\alpha \in \mathbb{R}_{\ge 0}$ and $d$ is a direction vector as follows. As with $f$, let $d = \begin{bmatrix} d_1, d_2, \ldots, d_n \end{bmatrix}^T$, where $d_i$ corresponds to the flow change on road $i$, specifically  $d_i = \begin{bmatrix} d^1_i, d^2_i, \ldots, d^m_i \end{bmatrix}^T$. We choose $d_1 =  \begin{bmatrix} -1, 0, \ldots, 0, 1, 0, \dots 0 \end{bmatrix}^T$, where $d^r_1=1$. We choose $d_2 = \begin{bmatrix} 1, -1, 0, \ldots, 0 \end{bmatrix}^T$, and, for $i = \{3, \ldots, r \}$, $d_i$ is equal to $d_{i-1}$ circularly shifted downward. For $i \notin {[}r{]}$, $d^j_i=0$ for all  $j  \in \vtypeset$. 

The direction defined above corresponds to shifting some flow of type $1$ from road $1$ to road $2$, some flow of type $2$ from road $2$ to road $3$, and so on, ending in some flow of type $r$ shifting from road $r$ to road $1$. With this defined direction, the flow vector $f' + \alpha d$ is feasible for some range $\alpha \in [0, \overline{\alpha}]$, which we show as follows, using Definition~\ref{def:feas} (feasible flow).

When starting from a feasible flow, moving in the direction $d$ satisfies conservation of flow, as $\sum_{i \in \roadset}d^j_i=0$ for all $j \in \vtypeset$. Moreover, by the definition of flow $f'$, $d^j_i < 0$ only when $f'^j_i >0$, meaning that in the direction $d$, flow of a certain vehicle type on a road is decreased only if there exists positive flow of that vehicle type already. As such, we find the maximum feasible range to be $\overline{\alpha} = \min_{k\in {[}r{]}}f^k_k$, since at this point the nonnegativity constraint becomes active. The reason it is the minimum of $f^k_k$, with $k\in {[}r{]}$, is because in direction $d$, some of vehicle type $k$ is shifted off road $k$. Since at $\alpha = \overline{\alpha}$ the nonnegativity constraint becomes active, at the flow $\tilde{f} =  f' + \overline{\alpha} d$, the cycle on roads ${[}r{]}$ has been broken. Note that no new cycles have been induced, as $d^j_i$ can only be greater than zero when $f'^j_i$ is already greater than zero, meaning no new edges are added to the graph by moving in direction $d$.

We now investigate the social cost at $\tilde{f}$. Since the objective function is quadratic (and therefore analytic), by a trivial application of Taylor's inequality we can express $J(\tilde{f})$ as a second-order Taylor expansion around $f'$ as follows.
$$ J(\tilde{f}) = J(f') + \langle \nabla J(f'),\overline{\alpha}d \rangle + \frac{1}{2}\overline{\alpha}d^T H(f') \overline{\alpha}d \; ,$$
where $H(f')$ is the Hessian of $J$ evaluated at $f'$. First-order optimality conditions imply that $\langle \nabla J(f'),\overline{\alpha}d \rangle = 0$, since $d$ only has nonzero elements where inequalities for feasible flow are not tight; thus for $f'$ to be optimal, the derivative of $J(f')$ in the direction of $d$ must be zero. We now investigate the latter term, ignoring the scalar $\frac{\overline{\alpha}^2}{2}$. Since $H$ is block-diagonal,
\begin{align*}
	d^T H d = \sum_{i \in \roadset} d^T_i H_i d_i \;.
\end{align*}

Let us inspect $d^T_i H_i d_i$. For any specific road $i'$, $d_{i'}$ has one entry that is $1$ and one that is $-1$. Let us assign $p$ and $p'$ such that $d^{p}_{i'} = 1$ and $d^{p'}_{i'} = -1$. Then,
\begin{align*}
	d_{i'}^T H_{i'} d_{i'} &= \sum_{j' \in \vtypeset} d^{j'}_{i'} \sum_{j \in \vtypeset}(a^{j'}_{i'} + a^j_{i'})d^j_{i'} \\
	&= \sum_{j' \in \vtypeset}d^{j'}_{i'} [(a^{j'}_{i'} + a^p_{i'})- (a^{j'}_{i'} + a^{p'}_{i'})] \\
	&= \sum_{j' \in \vtypeset} d^{j'}_{i'}(a^p_{i'} - a^{p'}_{i'}) = (a^p_{i'} - a^{p'}_{i'}) - (a^p_{i'} - a^{p'}_{i'}) \\
	&= 0 \; .
\end{align*}
Thus, $d^T H d = 0$ as well, so
$$J(\tilde{f}) = J(f') \; .$$

We have thus constructed a flow $\tilde{f}$ which has the same social cost as a socially optimal flow $f'$, which has broken a cycle in the graph representing $f'$ without introducing a new cycle. Since the number of roads and vehicle types is finite, this process can be repeated until we arrive at a flow which has no cycles and has a social cost which optimizes \eqref{eq:objective}. This proves the theorem statement.
\end{proof}

\begin{corollary}
	No two vehicle types share more than one road with positive flow of both vehicle types.
\end{corollary}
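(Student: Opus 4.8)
The plan is to read the corollary as a claim about the acyclic optimal routing $f \in \mathcal{F}^*$ whose existence is guaranteed by Theorem~\ref{thm:routing}, and to derive it as an immediate consequence of the acyclicity of $G(f)$. In the paper's notation, the assertion is that $|\sharedroads^f_j \cap \sharedroads^f_{j'}| \le 1$ for every pair of distinct vehicle types $j, j' \in \vtypeset$. The observation driving the argument is that two vehicle types sharing two roads corresponds exactly to a four-cycle in the bipartite graph $G=(U,V,E)$.

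First I would argue by contradiction: suppose there exist distinct types $j, j' \in \vtypeset$ and distinct roads $i, i' \in \roadset$ with $f^j_i, f^{j'}_i, f^j_{i'}, f^{j'}_{i'}$ all strictly positive, so that $\{i, i'\} \subseteq \sharedroads^f_j \cap \sharedroads^f_{j'}$. By the construction of the edge set in \eqref{eq:graph}, these positive flows produce the four edges $(i,j)$, $(i,j')$, $(i',j)$, and $(i',j')$ in $E$.

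I would then exhibit the cycle these edges form. Since $G$ is bipartite, the closed walk $i \to j \to i' \to j' \to i$ uses the four distinct nodes $i, j, i', j'$ and traverses the edges $(i,j)$, $(j,i')$, $(i',j')$, $(j',i)$, giving a simple cycle of length four in $G(f)$ and contradicting the acyclicity established in Theorem~\ref{thm:routing}. Hence at most one road can carry positive flow of both $j$ and $j'$, which is the corollary. I anticipate no real obstacle: the whole content is the equivalence between ``two types sharing two roads'' and ``$G(f)$ containing a four-cycle,'' after which the result is immediate, requiring no computation.
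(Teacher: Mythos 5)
Your proof is correct and matches the paper's intent exactly: the corollary is stated without proof as an immediate consequence of Theorem~\ref{thm:routing}, and the intended argument is precisely yours --- two types sharing two roads would form a four-cycle $i \to j \to i' \to j' \to i$ in the bipartite graph, contradicting acyclicity. Your reading of the statement as applying to the acyclic routing $f \in \mathcal{F}^*$ (rather than to every optimal routing) is also the right one, as the paper's own footnote in Section~\ref{sct:example} confirms.
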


We next provide a theorem describing tolls, based on the routing proven to exist above, which will lead the vehicles to follow the optimal routing.

\begin{theorem}\label{thm:toll}
	Consider a routing $\prevflow$ with an associated acyclic graph, which is proven to exist in Theorem~\ref{thm:routing}. Then levy the following tolls $\tau(\prevflow)$:
	\begin{equation}\label{eq:tolls}
		\tau^j_i(\prevflow) = \begin{cases}
		\mu- \ell_i(\prevflow) & \text{if} \; i \in \sharedroads^\prevflow_j \\	
		P & \text{otherwise} .
	\end{cases}
	\end{equation}
	Under Assumption~\ref{asmp:increasing_cost}, and for some constant $\mu$ and sufficiently large $P$, the only equilibrium that exists is the flow $\prevflow$.
\end{theorem}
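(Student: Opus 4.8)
The plan is to prove the two halves of the statement separately: first that $\prevflow$ is a Nash equilibrium under the tolls $\tau(\prevflow)$, and second that it is the only one. The observation driving both halves is that each toll $\tau^j_i(\prevflow)$ is a \emph{constant} --- it depends only on the fixed routing $\prevflow$, not on the current flow $f$ --- so the cost $c^j_i(f) = \ell_i(f) + \tau^j_i(\prevflow)$ is merely an affine shift of the latency, and the difference $c^j_i(f) - c^j_i(\prevflow) = \ell_i(f) - \ell_i(\prevflow)$ is toll-free.

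First I would verify that $\prevflow$ is an equilibrium. On any road $i \in \sharedroads^\prevflow_j$, substituting $f = \prevflow$ gives $c^j_i(\prevflow) = \ell_i(\prevflow) + \mu - \ell_i(\prevflow) = \mu$, so every type experiences the \emph{same} cost $\mu$ on every road it uses, which is precisely the condition in Definition~\ref{def:NE}; on a road $i \notin \sharedroads^\prevflow_j$ the cost is $\ell_i(\prevflow) + \largenum \ge \largenum \ge \mu$ once $\largenum \ge \mu$, so no deviation helps. For uniqueness I would then argue that, for $\largenum$ large enough, every equilibrium $f$ has $\sharedroads^f_j \subseteq \sharedroads^\prevflow_j$, i.e. $G(f)$ is a subgraph of $G(\prevflow)$. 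Since $\sum_j \flowdemand^j$ is fixed, every latency is uniformly bounded by $\ell_i(f) \le a^0_i + \sum_j a^j_i \flowdemand^j$, so the cost of any road in a type's optimal support is bounded \emph{independently of} $\largenum$; choosing $\largenum$ above this bound makes every off-support road strictly more expensive, and Definition~\ref{def:NE} forbids any type from placing flow there. Consequently $G(f)$ inherits acyclicity from $G(\prevflow)$ through Theorem~\ref{thm:routing}.

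Finally, on this forest I would show $f = \prevflow$. Writing $g = f - \prevflow$ (supported on the forest, with $\sum_i g^j_i = 0$ by conservation), the equilibrium condition forces the reduced latency $\Delta_i := \ell_i(f) - \ell_i(\prevflow) = \sum_k a^k_i g^k_i$ to be constant across the roads each type actually uses and equal to $\gamma_j - \mu$, where $\gamma_j$ is the common equilibrium cost of type $j$; propagating this equality along any tree component forces a single value $\Delta_i \equiv \delta$ on that component. Together with conservation this yields, on a tree, a square linear system in the edge-variables $g^j_i$ and the scalar $\delta$, which I would solve by leaf-stripping: a degree-one type node pins its single edge via $g^j_i = 0$, a degree-one road node gives $g^j_i = \delta/a^j_i$, and eliminating leaves one at a time collapses the tree to a single equation of the form $(\text{positive combination of the } 1/a^j_i)\,\delta = 0$, forcing $\delta = 0$ and hence $g = 0$.

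The step I expect to be the main obstacle is this last one. The objective is \emph{not} convex: the per-road Hessian block is indefinite, with $g_i^T H_i g_i = 2(\sum_k a^k_i g^k_i)(\sum_k g^k_i)$, so standard convex-uniqueness arguments are unavailable, and acyclicity (Theorem~\ref{thm:routing}) is exactly what rescues uniqueness --- a cycle would leave the linear system above underdetermined, since one could shift flow around it at no cost (as in the proof of Theorem~\ref{thm:routing}), producing a continuum of equilibria. The strict positivity $a^j_i > 0$ of Assumption~\ref{asmp:increasing_cost} is what keeps the accumulated coefficient of $\delta$ nonzero. A secondary subtlety is that an equilibrium might use a \emph{strict} subforest, dropping some edges of $G(\prevflow)$; I would dispatch this by showing that the equilibrium \emph{inequalities} on a dropped edge, combined with the sign of $\delta$ and positivity, are incompatible with conservation, so in fact no edge is dropped and the square-system argument applies verbatim.
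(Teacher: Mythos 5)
Your preliminary steps are sound: the observation that the tolls are constants, the verification that $\prevflow$ is itself an equilibrium with common cost $\mu$, and the support-containment claim for large $\largenum$ (the paper's Property~3) are all correct. Moreover, your leaf-stripping argument does work \emph{in the full-support case}: one can maintain the invariant that partially stripped road equations read $\sum_e a_e g_e = b_i\,\delta$ with $b_i \ge 1$ and stripped type equations read $\sum_e g_e = c_j\,\delta$ with $c_j \le 0$, so the last remaining node yields a strictly positive multiple of $\delta$ equated to zero. The genuine gap is the case you relegate to a one-sentence ``secondary subtlety'': an equilibrium $\newflow$ whose support is a \emph{strict} subset of the edges of $G(\prevflow)$. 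This is not a side case --- it is the crux, and your square system does not survive it. If edge $(i,j)$ is dropped ($\newflow^j_i = 0 < \prevflow^j_i$), then conservation over the roads type $j$ actually uses becomes inhomogeneous, $\sum_{i' \in \sharedroads^\newflow_j} g^j_{i'} = \prevflow^j_i > 0$; the latency difference $\Delta_i = \ell_i(\newflow)-\ell_i(\prevflow)$ at road $i$ picks up the term $-a^j_i \prevflow^j_i$ from a variable no longer governed by any equality; and $G(\newflow)$ may disconnect into several components, each carrying its own constant $\delta_c$, coupled only through one-sided inequalities $\gamma_j - \mu \le \Delta_i$ at the dropped edges. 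There is then no square system, and ``the sign of $\delta$'' is not even a single sign. A concrete illustration: on the path road~1 -- type~1 -- road~2 -- type~2 -- road~3, suppose $\newflow$ drops the edge (type~1, road~2). Then $\Delta_1 > 0$ directly, but showing $\Delta_2 < 0$ (needed to contradict the equilibrium inequality $\Delta_1 \le \Delta_2$) requires propagating the displaced flow $\prevflow^1_2$ through type~2's equilibrium split between roads~2 and~3 --- i.e., traversing the \emph{other} component. The incompatibility is global, not local to the dropped edge, and your sketch gives no mechanism for this propagation in a general forest with several dropped edges and several components.

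This is precisely why the paper's proof never forms an equality system. It works entirely with inequalities: starting from any coordinate with $\newflow^j_i > \prevflow^j_i$, it follows two chains in which either conservation forces an opposite-signed change of some type sharing the road (continue the chain) or no compensating change exists and the latency strictly moves (terminate); acyclicity of $G(\prevflow)$ guarantees both chains terminate, one at a strictly increased cost and one at a strictly decreased cost, while Property~1 (shared roads give identical costs under these tolls, for \emph{every} flow) and the Nash inequalities of Definition~2 string the two terminal costs into the contradiction $c^{j^{(k)}}_{i^{(p)}}(\newflow) > c^{j^{(r)}}_{i^{(s)}}(\newflow)$ and $c^{j^{(k)}}_{i^{(p)}}(\newflow) \le c^{j^{(r)}}_{i^{(s)}}(\newflow)$. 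That formulation handles dropped edges, disconnection, and multiple $\delta$'s uniformly, because it only ever uses the inequalities that equilibrium actually provides on the support of $\newflow$. If you complete your dropped-edge case in full generality you will essentially reconstruct this chain argument; so either carry out that case explicitly (e.g., ordering components by their $\delta_c$ and tracking displaced flow across dropped edges) or adopt the inequality-chain formulation from the outset.
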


\begin{proof}
We use the following properties related to the tolls described above.
\begin{property}\label{prop:toll_experience}
	As a result of the tolls in Theorem~\ref{thm:toll}, if two vehicle types have positive flow on a road in routing $\prevflow$, then the resulting tolls $\tau(\prevflow)$ will be such that the two vehicle types experience identical cost on that road. Mathematically,
	$$c^{j}_i(f) = c^{j'}_i(f) \; \forall f \in  \mathcal{F}_{\flowdemand} \land i \in \roadset \land j, j' \in \typesonroad^{\prevflow}_i \; .   $$
\end{property}

\begin{property}\label{prop:NE_eq_cost}
	As a result of Definition~\ref{def:NE}, if two roads have positive flow of a specific vehicle type at equilibrium, then the roads have equal cost for that vehicle type. Formally, if $f$ is an equilibrium, then
	$$
	c^j_i(f) = c^j_{i'}(f) \; \forall j \in \vtypeset \land i,i' \in \sharedroads^f_j \; .
	$$
\end{property}

\begin{property}\label{prop:toll_structure}
	For sufficiently large $P$, users in equilibrium will not use a road with toll $P$. Formally, for equilibrium flow $f$ experiencing tolls $\tau(\prevflow)$,
	$$
	f^j_i = 0 \; \forall j \in \vtypeset \land i \in \roadset \setminus \sharedroads^{\prevflow}_j \; .
	$$
\end{property}

We now prove the theorem by contradiction. Assume there exists some feasible flow $\newflow \neq \prevflow$ which is at equilibrium under tolls $\tau(\prevflow)$. Since $\newflow \neq \prevflow$, 
\begin{equation}\label{eq:branch1}
	\exists i \in \roadset \land j \in \vtypeset \; \text{s.t.} \; \newflow^j_i > \prevflow^j_i \; .
\end{equation}
From Property~\ref{prop:toll_structure}, $i \in \sharedroads^\prevflow_j$. Then, resulting from \eqref{eq:branch1}, either
\begin{enumerate}[(i)]
	\item $\exists j' \in \typesonroad^\prevflow_i \; \text{s.t.} \; \newflow^{j'}_i < \prevflow^{j'}_i $ , or
	\item $c^j_i(\newflow) > c^j_i(\prevflow)$ (from Assumption~\ref{asmp:increasing_cost}).
\end{enumerate}
If (i), then it must be the case that
\begin{enumerate}[(i)]
	\setcounter{enumi}{2}
	\item $\exists i' \in \sharedroads^\prevflow_{j'} \; \text{s.t. } \newflow^{j'}_{i'} > \prevflow^{j'}_{i'} $ , 
\end{enumerate}
due to Definition~\ref{def:feas} (flow conservation), which implies, from Definition~\ref{def:NE},
\begin{equation}
	c^{j'}_i(\newflow) \ge c^{j'}_{i'}(\newflow) \; .
\end{equation}
Then again, as a result of (iii) either (i) or (ii) must be the case, where $i$ is replaced by $i'$ and $j$ is replaced by $j'$. This process continues until we reach (ii). This terminal point must exist since the bipartite graph of roads and vehicle types is acyclic. Say the termination point is on vehicle type $j^{(k)}$ on road $i^{(p)}$. Then,
\begin{equation}\label{eq:1}
	c^{j^{(k)}}_{i^{(p)}}(\newflow) > c^{j^{(k)}}_{i^{(p)}}(\prevflow) = c^j_i(\prevflow) \; ,
\end{equation}
where the equality results from Properties~\ref{prop:toll_experience} and \ref{prop:NE_eq_cost}. Further, due to Definition~\ref{def:NE} and Property~\ref{prop:toll_experience},
\begin{equation}\label{eq:2}
	c^{j^{(k)}}_{i^{(p)}}(\newflow) \le c^{j^{(k)}}_{i^{(p-1)}}(\newflow) = c^{j^{(k-1)}}_{i^{(p-1)}}(\newflow) \le \ldots \le c^{j'}_i(\newflow) = c^j_i(\newflow) \; .
\end{equation}

We follow a similar logic down another branch. Reusing the indices $j'$ and $i'$ to a new use, we consider the other result of \eqref{eq:branch1}. By Definition~\ref{def:feas}, 
\begin{equation}\label{eq:branch2}
	\exists i' \in \sharedroads^\prevflow_j \; \text{s.t.} \; \newflow^j_{i'} < \prevflow^j_{i'} \; .
\end{equation}

Next, consider the results of \eqref{eq:branch2}. Similarly to above, either
\begin{enumerate}[(i)]
\setcounter{enumi}{3}
	\item $\exists j' \in \typesonroad^\prevflow_{i'} \; \text{s.t.} \; \newflow^{j'}_{i'} > \prevflow^{j'}_{i'} $ (where $j' \in \typesonroad^\prevflow_{i'}$ due to Property~\ref{prop:toll_structure}), or
	\item $c^{j'}_{i'}(\newflow) < c^{j'}_{i'}(\prevflow)$ (from Assumption~\ref{asmp:increasing_cost}).
\end{enumerate}
If (iv), then it must be the case that
\begin{enumerate}[(i)]
\setcounter{enumi}{5}
	\item $\exists i'' \in \sharedroads^\prevflow_{j'} \; \text{s.t. } \newflow^{j'}_{i''} < \prevflow^{j'}_{i''} $ ,
\end{enumerate}
due to Definition~\ref{def:feas} (flow conservation), which implies, from Assumption~\ref{asmp:increasing_cost},
\begin{equation}
	c^{j'}_{i''}(\newflow) \ge c^{j'}_{i'}(\newflow) \; .
\end{equation} 
Similarly to above, as a result of (vi), either (iv) or (v) must be the case, where $i'$ is replaced by $i''$ and $j$ is replaced by $j'$. This process continues until we reach (vi). Say the termination point is on vehicle type $j^{(r)}$ on road $i^{(s)}$. Then,
\begin{equation}\label{eq:3}
	c^{j^{(r)}}_{i^{(s)}}(\newflow) < c^{j^{(r)}}_{i^{(s)}}(\prevflow) = c^j_i(\prevflow) \; ,
\end{equation}
where the equality results from Properties~\ref{prop:toll_experience} and \ref{prop:NE_eq_cost}. Further, due to Definition~\ref{def:NE} and Property~\ref{prop:toll_experience},
\begin{equation}\label{eq:4}
	c^{j^{(r)}}_{i^{(s)}}(\newflow) \ge c^{j^{(r)}}_{i^{(s-1)}}(\newflow) = c^{j^{(r-1)}}_{i^{(s-1)}}(\newflow) \ge \ldots \ge c^{j'}_i(\newflow) = c^j_i(\newflow) \; . 
\end{equation}

We then string together \eqref{eq:1} and \eqref{eq:3} to find
\begin{equation*}
	c^{j^{(k)}}_{i^{(p)}}(\newflow) > c^{j^{(r)}}_{i^{(s)}}(\newflow) \; .
\end{equation*}
We also put together \eqref{eq:2} and \eqref{eq:4} to find
\begin{equation*}
	c^{j^{(k)}}_{i^{(p)}}(\newflow) \le c^{j^{(r)}}_{i^{(s)}}(\newflow) \; ,
\end{equation*}
which yields a contradiction.
\end{proof}

We show an example of this proof construction. Consider the routing in Figure~\ref{fig:cyclic_acyclic}~(c-d), with the proof illustrated in Figure~\ref{fig:ex_network_proof2}. As before, $\prevflow$ denotes the flow which minimizes the social cost and satisfies the condition in Theorem~1. We walk through a specific alternate flow $\newflow \neq \prevflow$ and show that it cannot exist in equilibrium when tolls are applied as in Theorem~\ref{thm:toll}, where the tolls are based on flow $\prevflow$.

Let us consider the flow $\newflow$ to have $\newflow^1_1 > \prevflow^1_1$, meaning the flow of type $1$ on road $1$ is higher in this new flow vector. By conservation of flow, $\newflow^1_2 < \prevflow^1_2$. Then, either $\newflow^3_1 < \prevflow^3_1$, meaning the flow of type $3$ on road $1$ decreases in the new flow vector, or  $c^1_1(\newflow) > c^1_1 (\prevflow)$ (by Assumption~\ref{asmp:increasing_cost}). Consider the former case. Then by conservation of flow and the fact that $\newflow$ must be in equilibrium and the tolling structure prevents flow of type $3$ on any road outside of the set $\{1,4\} $, $\newflow^3_4 > \prevflow^3_4$. Then by Assumption~\ref{asmp:increasing_cost}, $c^3_4(\newflow) > c^3_4(\prevflow)$.

Following the other branch of the diagram, as a result of $\newflow^1_2 < \prevflow^1_2$, either $\newflow^2_2 > \prevflow^2_2$ or $c^1_2(\newflow) < c^1_2(\prevflow)$. If the former, again $\newflow^2_3 < \prevflow^2_3$, resulting in $c^2_3(\newflow) < c^2_3(\prevflow)$.

\begin{figure}
	\centering
	\includegraphics[width=0.7\linewidth]{./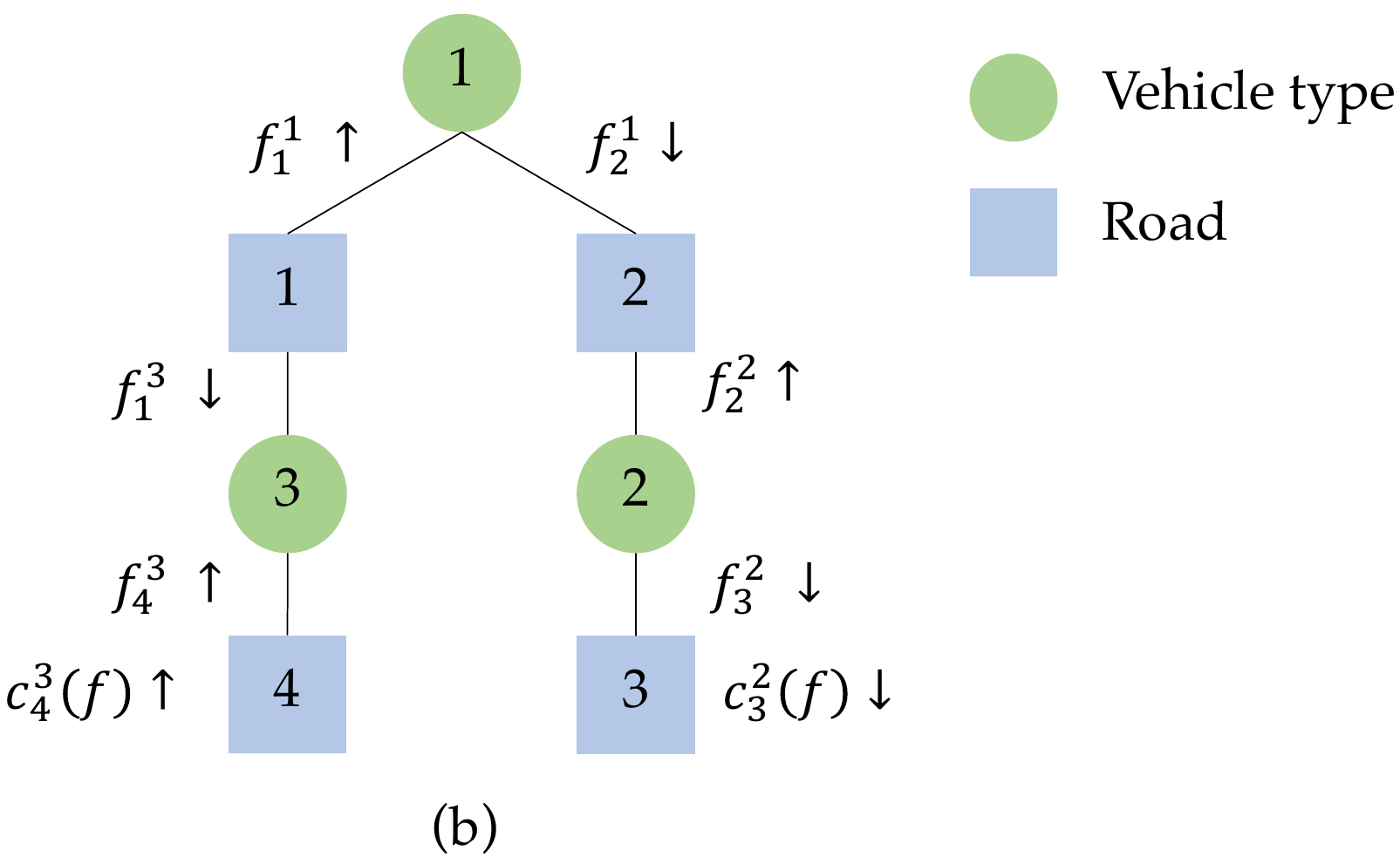}
	\caption{Flow changes for the proposed new flow at equilibrium.}
	\label{fig:ex_network_proof2}
\end{figure}

Due to the Properties~\ref{prop:toll_experience} and \ref{prop:NE_eq_cost},
$$
	c^3_4(\prevflow) = c^3_1(\prevflow) = c^1_1(\prevflow) = c^1_2(\prevflow) = c^2_2(\prevflow) = c^2_3(\prevflow) \; ,
$$
and combining it with the inequalities above, $c^3_4(\newflow) > c^2_3(\newflow)$. However, since $\newflow$ must be at equilibrium (yielding the inequalities), and from Property~\ref{prop:toll_experience} (yielding the equalities),
$$
	c^3_4(\newflow) \le c^3_1(\newflow) = c^1_1(\newflow) \le c^1_2(\newflow) = c^2_2(\newflow) \le c^3_2(\newflow) \; ,
$$
which yields a contradiction.

\section{NECESSITY OF TOLLING}\label{sct:bounds}
In this section we show the necessity of the tolling scheme proposed in the previous section, both in that tolls must differentiate between vehicle types, and that well-known marginal cost tolling \cite{beckmann1956studies} fails in our setting.

\noindent \textbf{Undifferentiated tolls.} We show via example example that unless different vehicle types can experience differentiated tolls, the equilibrium social cost can be unboundedly worse than the social optimum. A previous work has shown that when a network has multiple source-destination pairs, undifferentiated tolls may not induce a socially optimal flow \cite{mehr2019pricing}. As in \cite{lazar2019optimal}, we extend these results to a simple two-road network and show that under undifferentiated tolling, the equilibrium can have a social cost which is arbitrary worse than the social optimum.

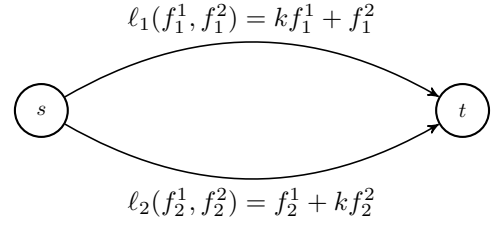
\begin{figure}
	\centering
	\begin{tikzpicture}[->, >=stealth', auto, semithick, node distance=7cm]
	\tikzstyle{every state}=[fill=white,draw=black,thick,text=black,scale=0.8]
	\node[state]    (0)               {$s$};
	\node[state]    (1)[right of=0]   {$t$};
	\path
	(0) edge[bend left]		node{$\ell_1(f^1_1, f^2_1) = k f^1_1 + f^2_1$}     (1)
	(0) edge[bend right]	node[below]{$\ell_2(f^1_2,f^2_2) = f^1_2 + k f^2_2$}     (1);
	\end{tikzpicture}
	\caption{Example of the futility of undifferentiated tolling in a simple network. Consider flow demands $\flowdemand^1=1$ and $\flowdemand^2=1$. The equilibrium under the best undifferentiated toll may be arbitrarily worse than the socially optimal routing.}
	\label{fig:undiffentiatedtolls}
\end{figure}

Consider the network in Fig.~\ref{fig:undiffentiatedtolls}, with flow demands $\flowdemand^1=1$ and $\flowdemand^2=1$, and let $k \ge 1$.  The socially optimal routing has social cost $2$, with $f^1_1=0$ and $f^2_1=1$. Without loss of generality, we can consider a toll on just one of the roads, since only the difference between the tolls on the two roads will affect the equilibrium. This example is symmetric, so without loss of generality let the top road be the road with a positive toll. In the resulting worst-case equilibrium, the top road has some flow of type $1$ and the bottom road has the remaining flow of type $1$ and all the flow of type $2$. To investigate how well the best toll can do, we derive the following.
\begin{align*}
	&\min_{f^1_1 \in {[}0,1{]}} f^1_1\ell_1(f^1_1,0) + (1-f^1_1+1)\ell_2(1-f^1_1,1) \\ 
	& = \min_{f^1_1 \in {[}0,1{]}} k (f^1_1)^2 + (1-f^1_1+1)(1-f^1_1+k) \\
	&= \frac{7k + 3}{4}-\frac{1}{k+1} < 2k \; .
\end{align*}

Though the toll decreases the social cost from that of the worst-case equilibrium, the social cost still increases linearly with $k$, while the socially optimal cost is constant with respect to $k$. This shows that the optimal undifferentiated tolling can result in arbitrarily worse social cost than the socially optimal routing even in this simple setting.

\noindent \textbf{Marginal Cost Tolling.} We next show by example that the classic marginal cost tolling, which is shown to be optimal in the case of a single vehicle type on a general network \cite{beckmann1956studies}, is not optimal in the multitype case in a parallel network with affine latency functions. Consider the network in Figure~\ref{fig:mctolls}, with two vehicle types, where the vehicle types have flow demand $ \flowdemand^1=2 $ and $\flowdemand^2=3$.

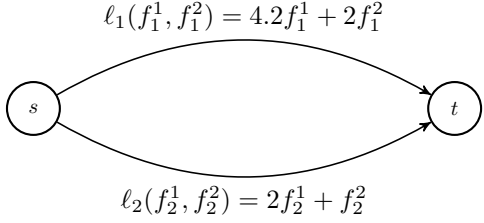
\begin{figure}
	\centering
	\begin{tikzpicture}[->, >=stealth', auto, semithick, node distance=7cm]
	\tikzstyle{every state}=[fill=white,draw=black,thick,text=black,scale=0.8]
	\node[state]    (0)               {$s$};
	\node[state]    (1)[right of=0]   {$t$};
	\path
	(0) edge[bend left]		node{$\ell_1(f^1_1, f^2_1) = 4.2 f^1_1 + 2 f^2_1$}     (1)
	(0) edge[bend right]	node[below]{$\ell_2(f^1_2,f^2_2) = 2 f^1_2 + f^2_2$}     (1);
	\end{tikzpicture}
	\caption{Example of the nonoptimality of marginal cost tolling in a simple network. Consider flow demands $\flowdemand^1=2$ and $\flowdemand^2=3$. Under marginal cost tolling, there exists an equilibrium with nonoptimal social cost.}
	\label{fig:mctolls}
\end{figure}

To find the optimal routing, we use Theorem~\ref{thm:routing} to solve four convex optimizations instead of one nonconvex one. The optimal routing is $f^1_1=0$ and $f^2_1=2$, for a social cost of $23$.

With marginal cost tolls, the toll for a vehicle type on a road is a function of the vehicle flows currently on that road -- in the affine case, $\tau^{j,\text{mc}}_i=a^j_i\sum_{j\in\vtypeset}f^j_i$. With these tolls, we find an equilibrium with $f^1_1=105/128$ and $f^2_1 = 95/128$; since both roads have positive flow of both vehicle types, we confirm that each vehicle type experiences identical cost on the two roads. In this equilibrium flow, the social cost is $23.6$, which is greater than the social optimum of $23$. This shows that marginal cost tolls are not optimal in this setting.

\section{NUMERICAL EXAMPLE}\label{sct:example}
We provide the following example to show the calculation of the tolling scheme described in the previous section, as well as the benefits of tolling. Consider a network of three parallel roads and three vehicle types, with flow demands $\overline{f}^1=3$, $\overline{f}^2=2$, and $\overline{f}^3=3$. Let the latency functions be as follows:
\begin{align*}
	\ell_1(f^1, f^2, f^3) &= 1 + 3f^1+f^2 + f^3 \\
	\ell_2(f^1, f^2, f^3) &= 2 + f^1 + 4f^2 + 2f^3 \\
	\ell_3(f^1, f^2, f^3) &= 1 + 2f^1 + f^2 + 3f^3 \; .
\end{align*}

In this example, a possible equilibrium has all flow of type $1$ on road $1$, flow of type $2$ on road $2$, and flow of type $3$ on road $3$, yielding a social cost of $80$. The optimal routing is shown in Table~\ref{table:example} and has flow type $1$ on roads $2$ and $3$, type $2$ on road $3$, and type $3$ on road $1$, for a social cost of $32.9$, approximately a $2.5$-fold improvement from the equilibrium described above. This routing satisfies the conditions in Theorem~\ref{thm:routing}\footnote{Some networks could also have socially optimal routings with cyclic bipartite graphs -- consider a network with $a^j_i=a^{j'}_i$ on all roads for two vehicle types $j$ and $j'$. In this case $j$ and $j'$ could share more than one road in an optimal routing. As guaranteed by Theorem~\ref{thm:routing}, there will also be routings in the set of minimizers in which the they share at most one road.}. The table also shows the proposed tolls based on this routing with $\mu=5$, which is chosen to be greater than the maximum latency so vehicles are only tolled, not subsidized.

\begin{table}
	\caption{Routings, Latency, and Tolls for the Example Network}
	\label{table:example}
	\centering
	\begin{tabular}{llllllllllll}
		\toprule
		& \multicolumn{4}{c}{Worst eq. routing} & \multicolumn{4}{c}{Opt. routing} & \multicolumn{3}{c}{Opt. Tolls}  \\
		& \multicolumn{4}{c}{social cost: $80$ } & \multicolumn{4}{c}{social cost: $32.9$} & &  \\
		\cmidrule(r){2-5}
		\cmidrule(r){6-9}
		\cmidrule(r){10-12}
		Rd  & $f^1$ \hspace{-10px} & $f^2$ \hspace{-10px} & $f^3$ \hspace{-10px} & $\ell$ & $f^1$ \hspace{-20px} & $f^2$ \hspace{-10px} & $f^3$ \hspace{-10px} & $\ell$ & $\tau^1$ & $\tau^2$ & $\tau^3$ \\
		\midrule
		1 & $3$ & $0$ & 0 & 10 & 0 & 0 & 3 & 4 & $\largenum$ & 1 & 1  \\
		2 & $0$ & $2$ & 0 & 10 & 2.83 & 0 & 0 & 4.83 & $\largenum$ & $\largenum$ & 0.17  \\
		3 & $0$ & $0$ & 3 & 10 & 0.17 & 2 & 0 & 4.33 & 0.67 & $\largenum$ & $\largenum$ \\
		\bottomrule
	\end{tabular}
\end{table}
\vspace{-0.1cm}

\section{CONCLUSION}\label{sct:conclusion}
In this paper we considered tolling on parallel roads with multiple vehicle types and affine latency functions. We derived a key property of optimal routing in this setting and used this property to establish optimal tolls. Future works may generalize these results, both in terms of the network and the considered latency functions. Further extensions of this work will be critical in understanding traffic routing in the presence of many vehicle types with varying levels of autonomy.

\bibliographystyle{IEEEtran}
\bibliography{refs}


\end{document}